\documentclass{amsart}
\usepackage{amscd, amsmath, amsthm, amssymb, mathdots}
\usepackage{enumerate, comment}
\usepackage{tikz}
\usepackage{pgfplots}
\usepackage[utf8]{inputenc}
\usepgfplotslibrary{ternary}
\usepackage{ifthen} 
\usepackage[bookmarksnumbered,colorlinks, plainpages]{hyperref}
\usepackage[capitalize,noabbrev]{cleveref} 
\usepackage{stackrel}
\hypersetup{
pdftitle={Research Paper},
pdfauthor={Reza Abdolmaleki},
pdfsubject={Minimal generators of powers of monomial ideals},
pdfcreator={Reza Abdolmaleki},
colorlinks,
linkcolor=blue,
citecolor=magenta,
anchorcolor=red,
bookmarksopen,
urlcolor=black,
filecolor=red,
}
\theoremstyle{plain}
\newtheorem{Theorem}{Theorem}[section]

\newtheorem{Corollary}[Theorem]{Corollary}
\newtheorem{Proposition}[Theorem]{Proposition}

\theoremstyle{definition}

\newtheorem{Example}[Theorem]{Example}

\newtheorem{Question}[Theorem]{Question}
\newtheorem{Fact}[Theorem]{Fact}

\theoremstyle{remark}
\newtheorem*{Acknowledgments}{Acknowledgments}

\newcommand{\ZZ}{\mathbb{Z}}

\def\cocoa{{\hbox{\rm C\kern-.13em o\kern-.07em C\kern-.13em o\kern-.15em A}}}

\newcommand{\fkm}{\mathfrak{m}}

\DeclareMathOperator{\lcm}{lcm}
\DeclareMathOperator{\sign}{sign}

\begin{document}

\title{Prescribed Initial Behavior of $\mu(I^k)$}

\author{Reza Abdolmaleki}
\address{Department of Mathematics, Lahore University of Management Sciences, DHA, Lahore Cantt. 54792, Lahore, Pakistan}
\email{reza.abdolmaleki@lums.edu.pk}
\email{reza.abd110@gmail.com}

\author{Shinya Kumashiro}
\address{Department of Mathematics, Osaka Institute of Technology, 5-16-1 Omiya, asahi-ku, Osaka, 535-8585, Japan}
\email{shinya.kumashiro@oit.ac.jp}
\email{shinyakumashiro@gmail.com}

\begin{abstract}
It is well known that for  every graded ideal $I$, the numbers of minimal generators $\mu(I^k)$ of its powers of $I$ are eventually increasing. However, its initial behavior can be surprisingly flexible. We prove that any prescribed finite pattern of increases, decreases, and equalities can occur among the first differences $\mu(I^{k+1})-\mu(I^k)$ of a suitable monomial ideal $I$ in $K[x,y]$. This provides a broad positive answer to a previously posed sign-realization problem and unifies several known constructions exhibiting unusual behavior of $\mu(I^k)$. Moreover, as a consequence, analogous sign-realization results are obtained for the index of reducibility of powers of $\mathfrak m$-primary ideals in two-dimensional regular local rings.
\end{abstract}


\subjclass[2020]{Primary:13F20; Secondary:13A30, 13E15.}
\keywords{monomial ideal, powers of ideals, minimal generators, index of reducibility}
\thanks{}

\maketitle

\section{Introduction}

The behavior of algebraic invariants under taking powers of ideals is a
central theme in commutative algebra.  Many such invariants are eventually
governed by polynomial or linear functions, and this asymptotic regularity is
often well understood.  However, the initial behavior before the asymptotic
range begins can be much more subtle.  It may reflect phenomena which are
invisible from the eventual polynomial behavior alone.

One basic invariant of this kind is the number $\mu(I^k)$  of minimal generators of powers
of a graded ideal $I$.  If \(I\) is a graded ideal, then the function
\[
 k\longmapsto \mu(I^k)
\]
is the Hilbert function of the fiber cone of \(I\).  Hence it agrees with the
Hilbert polynomial of the fiber cone for all sufficiently large \(k\) (\cite[Theorem 4.1.3]{BH}).  In
particular, the sequence \(\{\mu(I^k)\}_{k\ge 1}\) is eventually increasing.
Thus the natural question is not whether monotonicity eventually occurs, but
rather how complicated the sequence can be before it enters its eventual
asymptotic regime.

The first examples showing that the initial behavior can be far from
monotone were constructed in several forms. It was shown in
\cite{EHS} that there exist monomial ideals with tiny squares, that is,
ideals satisfying $ \mu(I^2)<\mu(I)$. 
Later, examples with tiny higher powers were constructed in \cite{G}, where
$ \mu(I^k)<\mu(I)$ for suitable values of \(k\).
 
Further examples of non-monotone behavior and a systematic study of the initial behavior of the sequence $\{\mu(I^k)\}_{k\ge 1}$ were given in \cite{AHZ}, where it was shown that in two variables the sequence $\mu(I^k)$ can attain a local maximum or a local minimum at an arbitrary power $k$. Moreover, for any given number $q$, there exists an ideal in $2\ell$ variables, where $\ell = (q+1)^2$, having at least $q$ local maxima.

The next construction was given in \cite{AK}. It was shown
that, for every positive integer \(n\), there exists a monomial ideal
\(I\) such that
\[
\mu(I^{n})<\mu(I^{n-1})<\cdots <\mu(I).
\]

 These works show that the first few values of \(\mu(I^k)\) are not constrained by the eventual monotonicity in any simple way.

In \cite[Question~2.8(a)]{AK}, the authors asked whether one can
prescribe arbitrarily the signs of the initial differences
\(\mu(I^{k+1})-\mu(I^k)\) for a monomial ideal \(I\subset K[x,y]\),
allowing only increases and decreases.
In this paper, we consider the following more general problem, where
zero differences are also allowed.

\begin{Question}(cf. \cite[Question~2.8(a)]{AK}) \label{q11}
Can one prescribe arbitrarily the signs of the initial differences
\(\mu(I^{k+1})-\mu(I^k)\), allowing increases, decreases, and equalities,
before \(\mu(I^k)\) becomes linear? In other words, given an integer
\(m\ge 3\) and signs
\[
 \varepsilon_1,\dots,\varepsilon_{m-2}\in\{+,-,0\},
\]
does there exist a monomial ideal \(I\subset K[x,y]\) such that
\[
 \sign(\mu(I^{k+1})-\mu(I^k))=\varepsilon_k
 \qquad
 (1\le k\le m-2)
\]
and
\[
 \mu(I^k)=(m+2)k+1
 \qquad
 (k\ge m)?
\]
Here, the sign function is defined by
\[
\sign(t)=
\begin{cases}
+ & \text{if } t>0,\\
- & \text{if } t<0,\\
0 & \text{if } t=0.
\end{cases}
\]
\end{Question}

\vspace{1em}

We prove that this generalized sign-realization problem has a positive
answer in the polynomial ring \(K[x,y]\). In fact, for every
\(m\ge 3\) and every choice of signs
\[
 \varepsilon_1,\dots,\varepsilon_{m-2}\in\{+,-,0\},
\]
we construct a monomial ideal \(I\subset K[x,y]\) satisfying the two
conditions in Question~\ref{q11} (Theorem~\ref{thm:prescribed-signs}). 
Moreover, by choosing the parameters in the construction sufficiently large,
we also obtain the full sign-realization statement in the case where the last
prescribed sign is negative (Corollary~\ref{cor:terminal-negative}).

Thus our result gives a unified way to produce monomial ideals whose
sequences of minimal numbers of generators exhibit prescribed initial behavior. Especially, it covers all cases provided in \cite{AHZ, AK, EHS, G}. 
Moreover, allowing zero signs produces arbitrarily long initial plateaus.  In particular, for
every positive integer \(t\), one obtains monomial ideals for which
\[
 \mu(I)=\mu(I^2)=\cdots=\mu(I^t)
\]
while the sequence eventually becomes increasing.  These examples show that
the initial behavior of \(\{\mu(I^k)\}_{k\ge 1}\) is highly flexible. 

\vspace{1em}

Our results also have consequences for the index of reducibility of powers
of ideals in Noetherian local rings. Recall that, if \(J\) is an
\(\mathfrak m\)-primary ideal in a Noetherian local ring
\((R,\mathfrak m)\), then
\[
r(R/J)=\ell_R\bigl((J:_R\mathfrak m)/J\bigr),
\]
where \(\ell_R(-)\) denotes length, is called the {\it index of reducibility}
of \(J\), or equivalently the Cohen--Macaulay type of \(R/J\).
It is known that the function $n\mapsto r(R/J^n)$ is eventually polynomial of degree $\dim R-1$ (\cite[Theorem~4.1]{CQT}). Moreover, in a two-dimensional regular local ring, $r(R/J^n)=\mu(J^n)-1$, for all $n\ge 1$ \cite[Proposition~2.9]{AK}. Consequently, our sign-realization results for $\mu(I^n)$ yield analogous realization results for the index of reducibility. In particular, arbitrary prescribed patterns of increases, decreases, and equalities can also occur among the consecutive differences $r(R/J^{n+1})-r(R/J^n)$ before the sequence reaches its eventual polynomial behavior (\cref{cor:Index-red}).


\section{Main theorem}

In what follows, let $S=K[x, y]$ be the polynomial ring over a field $K$ and $I$ a monomial ideal of $S$. 
The main goal of this article is to prove the following.

\begin{Theorem}\label{thm:prescribed-signs}
Let \(m\ge 3\), and let $  \varepsilon_1,\dots,\varepsilon_{m-2}\in\{+,-,0\}$. 
Then there exists a monomial ideal \(I \subset S\) such that
\[
 \sign(\mu(I^{k+1})-\mu(I^k))=\varepsilon_k
 \qquad \text{ for all $1\le k\le m-2$}
\]
and
\[
 \mu(I^k)=(m+2)k+1
 \qquad \text{ for all $k\ge m$}.
\]
\end{Theorem}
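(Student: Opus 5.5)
We will build $I$ from two kinds of monomials in $S=K[x,y]$: a ``long'' part that fixes the eventual linear behavior, and a family of ``gap'' generators placed so that each one stops contributing at a prescribed power. The approach rests on the standard description of monomial ideals in two variables. A monomial ideal is determined by its staircase $x^{a_0}y^{b_0},\dots,x^{a_r}y^{b_r}$ with $a_0<\dots<a_r$ and $b_0>\dots>b_r$. Moreover, $\mu(I^k)$ is the number of vertices of the staircase of $I^k$, i.e. the number of points of the Minkowski sum $kG$ of exponent sets that are not dominated by another point of $kG$.

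Concretely, we will start with an ideal $J$ whose powers are ``linear'': for example
\[
J=(x^{N},y^{N})+(x^{a}y^{b}: (a,b)\text{ on a segment}),
\]
chosen so that $J^k$ has exactly $(m+2)k+1$ generators for all $k\ge 1$. A convenient model is the lattice points of a segment with $m+2$ steps, for which $k$ copies give $(m+2)k+1$ points.

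We then add (or delete) generators lying strictly below, or slightly above, the segment, in finitely many ``pockets''. Each pocket is designed so that its contribution $c_j(k)$ to $\mu(I^k)$ is a prescribed function that equals a nonzero number for $k<k_j$ and $0$ for $k\ge k_j$. The vanishing happens because for $k\ge k_j$ the extra corners get dominated by products of the main generators. The pockets will be separated far apart along the staircase (using large parameters, placing them at exponents spaced by a large $N$), so that in $I^k$ the pockets interact only with the main segment and not with each other. This gives
\[
\mu(I^k)=(m+2)k+1+\sum_j c_j(k).
\]
Since $\mu(I^{k+1})-\mu(I^k)=m+2+\sum_j(c_j(k+1)-c_j(k))$, we choose the amplitudes of the pockets so that the correction at step $k$ is any integer we like. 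That is, we prescribe a sequence $d_k$ with $\sign(d_k)=\varepsilon_k$ for $k\le m-2$, take $d_{m-1}$ arbitrary, and pick pockets whose total correction jumps by $d_k-(m+2)$ at step $k$, with all corrections dead from $k=m$ on.

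The key steps, in order, are as follows.
\begin{enumerate}[(1)]
\item Compute $\mu(J^k)$ for the base segment ideal.
\item Design a single pocket gadget with parameter $t$ (depth) and multiplicity $s$, and prove that it contributes exactly $s$ extra corners (or removes $s$ corners) for $k<t$ and none for $k\ge t$.
\item Prove the locality and additivity statement for well-separated gadgets.
\item Solve the resulting triangular linear system for the multiplicities.
\end{enumerate}
Negative multiplicities can be realized by gadgets that \emph{merge} corners, i.e. a generator slightly outside the segment that kills $s$ segment points in the first few powers. Alternatively, we keep everything positive by adding a large constant to all contributions: a gadget with death time $t$ then has only increments at $t$.

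The main obstacle is step (2). We must control exactly which points of $kG$ survive as minimal generators when a gadget generator is combined with segment generators in all ways. We also need to verify that the death time is sharp, with no spurious corners appearing when two gadget copies multiply. Our first attempt will be a gadget consisting of monomials $x^{u}y^{v}$ placed at lattice distance $1/t$-proportional below the segment line (after scaling everything by a large factor). Then the $k$-fold sums lie below $kL$ only while at most $t-1$ segment factors are used, and a direct convexity argument (the Newton polygon of $I^k$ equals $k$ times that of $I$) will decide survival.
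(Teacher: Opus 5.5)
There is a genuine gap. Steps (2) and (3) carry the entire content of the theorem, and you do not carry them out. Moreover, the mechanisms you sketch for them cannot work.

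Let $\ell$ be the line through your segment. A gadget monomial $g$ strictly below $\ell$ can never be absorbed. For every $k$, $g$ times any $k-1$ base generators lies below $k\ell$ by exactly the amount that $g$ lies below $\ell$, while all products of base generators lie on $k\ell$. So your claim that the $k$-fold sums lie below $k\ell$ ``only while at most $t-1$ segment factors are used'' is false, and such a $g$ changes the Newton polygon of every power of $I$. The same objection applies to your ``merging'' gadgets for negative multiplicities, because a monomial dividing a segment point lies strictly below $\ell$.

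Monomials slightly above $\ell$ do not help over your base either. Take $J=(x^d,y^d)^{m+2}$; the computation is identical for any segment with equal steps. Every monomial $g\notin J$ on or above $\ell$ has the form $x^{id+\alpha}y^{(m+1-i)d+\beta}$ with $0<\alpha,\beta<d$. Then $g$ times $k-1$ generators of $J$ has exponent $(jd+\alpha,(k(m+2)-j-1)d+\beta)$ for some $j$. This is divisible by a generator $x^{j'd}y^{(k(m+2)-j')d}$ of $J^k$ only if simultaneously $j'\le j$ and $j'>j$. Hence $gJ^{k-1}\not\subseteq J^k$ for every $k$: over a base whose staircase is already full, no pocket ever dies. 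Taking $g$ of minimal degree, its contribution even grows linearly in $k$. Finally, your locality claim (3) is unsupported: products of generators from different pockets land between the pockets, however far apart the pockets are placed.

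The missing idea is that pockets are absorbed only because the base's \emph{own} powers fill in a gap of its staircase. So the base must deliberately have non-linear $\mu$ at first. The paper takes this from Fact~\ref{thm:ABK-construction}:
\begin{itemize}
\item The base $I_1=(x^{p_1},y^{p_1})(x^{(m+1)p_1},y^{(m+1)p_1})$ has $\mu(I_1^k)=(k+1)^2$ for $k<m$. Its staircase gaps close exactly at $k=m$, where $\mu(I_1^k)=(m+2)k+1$.
\item The pockets $I_i=x^{ip_1+p_i}y^{(m+2-i)p_1+p_2+\cdots+p_i}(x^{p_i},y^{p_i})^{a_i-1}$ lie above the Newton line of $I_1$, inside that gap.
\item One has $I^k=I_1^{k-1}(I_1+\cdots+I_{m+1-k})$. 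This gives $\mu(I^k)=(k+1)^2+k(a_2+\cdots+a_{m+1-k})$ for $k\le m-1$, and $I^k=I_1^k$ for $k\ge m$.
\end{itemize}
A pocket contributes $ka_i$ rather than a constant, and the base part is quadratic. So the sign conditions do not reduce to choosing an amplitude at each death time, as in your step (4). They become $a_2+\cdots+a_{m-k}-ka_{m+1-k}=v_k$ with $v_k=\mu(I^{k+1})-\mu(I^k)-2k-3$. The paper solves this system in integers with $a_i\ge m-1$, which is needed for the hypothesis $p_2+\cdots+p_{m-1}<p_1$. It does so by prescribing the differences to be $L$, $-L$ or $0$ with $L=\lcm(1,\dots,m-1)$, and taking a sufficiently large parameter $\lambda$ (Proposition~\ref{prop:linear-solution}).
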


To construct certain monomial ideals satisfying Theorem \ref{thm:prescribed-signs}, we shall use the following result from \cite{AK}.

\begin{Fact}\label{thm:ABK-construction}{\rm (\cite[Theorem 2.1]{AK})}
Let $m, p_1, \dots, p_m, a_2, \dots, a_m\ge 2$ be positive integers such that 
\begin{center}
$p_1=(a_i+1)p_i$ for $i\in \{2,\dots, m\}$ \quad and \quad $p_2+\cdots +p_{m-1} < p_1$. 
\end{center}
Let 
\begin{align*}
I_1=&(x^{p_1}, y^{p_1})(x^{(m+1)p_1}, y^{(m+1)p_1}) \text{ and}\\
I_i=&x^{ip_1+p_i}{\cdot}y^{(m+2-i)p_1+p_2+\cdots+p_i}{\cdot}(x^{p_i}, y^{p_i})^{a_i-1}
\end{align*}
be ideals of $S$ for $i\in \{2, \dots, m\}$. Set $I=I_1 + \dots+I_m$. 
Then 
\[
I^k=\begin{cases}
I_1^{k-1}(I_1+\cdots+I_{m+1-k}) & \text{ if $1\le k \le m-1$}\\
I_1^k & \text{ if $m\le k$}
\end{cases}
\]
and  
\begin{equation}
\label{number}
\mu(I^k)=\begin{cases}
(k+1)^2+k(a_2+\cdots+a_{m+1-k}) & \text{ if $1\le k \le m-1$}\\
(m+2)k+1 & \text{ if $m\le k$}.
\end{cases}
\end{equation}
\end{Fact}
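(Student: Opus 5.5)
The plan is to work throughout with the exponent staircase of each ideal in $\mathbb{Z}_{\ge 0}^2$. First I would handle $I_1$ alone. It is generated in the single degree $(m+2)p_1$, with generators $x^{p_1 c}y^{p_1((m+2)-c)}$ for $c\in\{0,1,m+1,m+2\}$. Hence $I_1^k$ is equigenerated in degree $(m+2)kp_1$, and its minimal generators are exactly the distinct monomials
\[
x^{p_1 c}y^{p_1((m+2)k-c)},\qquad c=a+(m+1)b,\quad 0\le a\le k,\ 0\le b\le k .
\]
Since the generators are equigenerated, no minimality check is needed; only distinctness matters.
\begin{itemize}
\item If $k\le m$, then $a\le m$, so the values $a+(m+1)b$ are pairwise distinct. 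This gives $\mu(I_1^k)=(k+1)^2$.
\item If $k\ge m$, then $a$ ranges over at least $0,\dots,m$. The values $c$ then fill every integer in $[0,(m+2)k]$, giving $(m+2)k+1$ generators.
\end{itemize}
I would also record the ``holes'' of $I_1^k$ for $k<m$. These are the integers $c$ with $k<c-(m+1)b<m+1$ for some $b$, namely runs of missing corners of length $m-k$ in each of the $k$ gaps.

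Next I would place each $I_i$ inside this picture. Write $P_i=p_2+\cdots+p_i$, and recall $a_ip_i=p_1-p_i$. The generators of $I_i$ are
\[
x^{ip_1+(j+1)p_i}\,y^{(m+2-i)p_1+P_{i-1}+(a_i-j)p_i},\qquad 0\le j\le a_i-1 .
\]
Their $x$-exponents lie strictly between $ip_1$ and $(i+1)p_1$, and they have common degree $(m+3)p_1+P_{i-1}$. So $I_i$ sits just above a hole of $I_1$ (corner index $c=i$, with $2\le i\le m$), and the condition $P_{m-1}<p_1$ keeps its $y$-exponents below the next corner of $I_1$. Hence $I_i\not\subseteq I_1$, and the generators of $I_i$ form a staircase of $a_i$ minimal generators filling that hole.

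The key step is a set of containments, which I would verify by comparing exponents with the identity $(a_i+1)p_i=p_1$:
\[
I_iI_j\subseteq I_1\,I_{i+j-1}\quad (i+j-1\le m),\qquad I_iI_j\subseteq I_1^2\quad (i+j-1>m),
\]
with the convention $I_1I_1=I_1^2$. Induction on $k$ then gives $I^k=I_1^{k-1}(I_1+\cdots+I_{m+1-k})$ for $k\le m-1$, and $I^k=I_1^k$ for $k\ge m$. Rather than using the containments as stated, the induction should use their refinement: multiplying by $I_1$ shifts the corner index by $0,1,m+1,m+2$, so $I_1^{k-1}I_i$ lands at holes of index $i+(k-1)$-type positions. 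The term $I_1^{k-1}I_i$ survives only when the corresponding hole of $I_1^k$ still exists, i.e.\ $i\le m+1-k$.

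Finally I would count minimal generators for $k\le m-1$. The term $I_1^{k-1}I_i$ places a translate of the $a_i$-step staircase of $I_i$ into each of the $k$ gaps of $I_1^{k}$ (one for each $b$-block). These new monomials are not in $I_1^k$ and do not divide one another across different $i$, since their $x$-ranges are disjoint. This contributes $k a_i$ new minimal generators, each replacing nothing; the corners of $I_1^k$ adjacent to them remain minimal because of the degree gap $p_1+P_{i-1}$. Summing gives $(k+1)^2+k(a_2+\cdots+a_{m+1-k})$.

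I expect the main obstacle to be this last minimality bookkeeping: showing that the translated staircases neither cover any corner of $I_1^k$ nor dominate each other. I would attack it with explicit inequalities on the $x$- and $y$-exponents, using $P_{m-1}<p_1$ as the decisive bound.
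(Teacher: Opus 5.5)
The paper does not prove this statement; it quotes it from \cite[Theorem~2.1]{AK}. Your preliminary analysis is correct: the corners of $I_1^k$, the counts $(k+1)^2$ and $(m+2)k+1$, and the position of $I_i$ inside the hole of $I_1$. However, there is a genuine gap. The step you call key is false, and so is the reason behind your refinement.

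Write $P_i=p_2+\cdots+p_i$, and write the generators of $I_i$ as
\[
g_{i,s}=x^{ip_1+sp_i}y^{(m+3-i)p_1+P_{i-1}-sp_i},\qquad 1\le s\le a_i .
\]
Take $m=4$, $(a_2,a_3,a_4)=(3,2,2)$, $p_1=12$ and $(p_2,p_3,p_4)=(3,4,4)$. All hypotheses hold, since $p_2+p_3=7<12$, and
\[
I_1=(y^{72},\,x^{12}y^{60},\,x^{60}y^{12},\,x^{72}),\qquad I_2=(x^{27}y^{57},\,x^{30}y^{54},\,x^{33}y^{51}),
\]
\[
I_3=(x^{40}y^{47},\,x^{44}y^{43}),\qquad I_4=(x^{52}y^{39},\,x^{56}y^{35}).
\]

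\textbf{The containment is false.} We have $(x^{33}y^{51})^2=x^{66}y^{102}\notin I_1I_3$. The only products of generators of $I_1$ and $I_3$ with $x$-exponent at most $66$ are $x^{40}y^{119}$, $x^{44}y^{115}$, $x^{52}y^{107}$ and $x^{56}y^{103}$, and none divides it. So $I_2I_2\not\subseteq I_1I_3$, even though $2+2-1\le m$. Here the product lies in $I_1^2$, via $x^{60}y^{84}$. With $m=6$, $p_1=24$ and $(p_2,\dots,p_6)=(2,6,6,4,8)$ it is worse: $(x^{70}y^{146})^2$ lies in $I_1I_4$ but not in $I_1^2+I_1I_2+I_1I_3$. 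The cause is a carry when $sp_i+tp_j>p_1$.

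\textbf{The refinement's mechanism is wrong.} Take $k=2$ and $i=4>m+1-k=3$. Then $x^{60}y^{12}\cdot x^{52}y^{39}=x^{112}y^{51}\in I_1I_4$, yet it is not in $I_1^2$: every generator of $I_1^2$ has $x$-exponent $\ge 120$ or $y$-exponent $\ge 60$. So it sits in a hole of $I_1^2$ that still exists. It is redundant only because $x^{72}\cdot x^{40}y^{47}\in I_1I_3$ divides it.

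\textbf{What is missing: the absorption lemma.} $I_1^{k-1}$ has $k^2$ corners $h=x^{cp_1}y^{((m+2)(k-1)-c)p_1}$ with $c=a+(m+1)b$. Your count only tracks translates by the $k$ corners with $a=k-1$. For $a\le k-2$, the translate $h\,g_{i,u}$ lands in a genuine hole of $I_1^k$, so its redundancy has to come from lower-index pieces. Let $h'$ be the corner of index $c+1$ and $v=\lfloor up_i/p_{i-1}\rfloor$; note $v\le a_{i-1}$ because $p_1=(a_{i-1}+1)p_{i-1}$. One checks directly:
\begin{itemize}
\item if $i\ge 3$ and $v\ge 1$, then $h'g_{i-1,v}\mid h\,g_{i,u}$;
\item if $i\ge 4$ and $v=0$, then $h'g_{i-2,a_{i-2}}\mid h\,g_{i,u}$;
\item if $i=2$, or $i=3$ and $v=0$, then the corner of $I_1^k$ of index $c+2$ divides $h\,g_{i,u}$.
\end{itemize}
Combine this with the correct fact that translates by the corners with $a=k-1$ fall into $I_1^k$ when $i>m+1-k$. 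Induction on $i$ then gives $I_1^{k-1}I=I_1^{k-1}(I_1+\cdots+I_{m+1-k})$, reading the sum as $I_1$ when $k\ge m$. The same lemma is what justifies discarding the other $k(k-1)a_i$ translates in your final count of minimal generators.

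\textbf{Still needed for products.} You must prove separately that $I_iI_j\subseteq I_1I$ for $i,j\ge 2$, so that $I^k=I_1^{k-1}I$. The absorbing factor has to be chosen according to whether $sp_i+tp_j$ exceeds $p_1$. The examples above show that no uniform rule such as $I_1I_{i+j-1}$ or $I_1(I_1+I_{i+j-1})$ can work.
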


\vspace{1em}


By Fact~\ref{thm:ABK-construction}, we have 
\begin{equation}\label{eq:difference-formula}
\begin{aligned}
\mu(I^{k+1})-\mu(I^k)
&=(k+2)^2-(k+1)^2 +(k+1)(a_2+\cdots+a_{m-k})
-k(a_2+\cdots+a_{m+1-k})\\
&=2k+3+a_2+\cdots+a_{m-k}-k a_{m+1-k}
\end{aligned}
\end{equation}
for all $k\in \{1, \dots, m-2\}$. 
Set
\begin{equation}\label{eq:vk-definition}
 v_k=\mu(I^{k+1})-\mu(I^k)-2k-3
 \qquad
 (1\le k\le m-2).
\end{equation}
Therefore, 
\begin{equation}\label{eq:vk-linear-expression}
 v_k=a_2+\cdots+a_{m-k}-k a_{m+1-k}
 \qquad
 (1\le k\le m-2).
\end{equation}

We now rewrite \eqref{eq:vk-linear-expression} as a matrix equation.
Let
\[
 \mathbf a=
 \begin{pmatrix}
 a_2\\
 a_3\\
 \vdots\\
 a_m
 \end{pmatrix}
\quad
\text{and}
\quad 
 \mathbf v=
 \begin{pmatrix}
 v_1\\
 v_2\\
 \vdots\\
 v_{m-2}
 \end{pmatrix}.
\]
Define the \((m-2)\times(m-1)\) matrix \(A\) as
\[
A=
\begin{pmatrix}
1&1&\cdots&1&1&-1\\
1&1&\cdots&1&-2&0\\
\vdots&\vdots&\iddots&-3&0&0\\
1&1&\iddots&\iddots&\vdots&\vdots\\
1&-(m-2)&0&\cdots&0&0
\end{pmatrix}.
\]
With this notation, \eqref{eq:vk-linear-expression} is exactly $A\mathbf a=\mathbf v$. 
Therefore, in order to prescribe the signs of $\mu(I^{k+1})-~\mu(I^k)$ for $k\in \{1, \dots, m-2\}$, 
we need to find integer solutions \(\mathbf a\) for $A\mathbf a=\mathbf v$.

\begin{Proposition}\label{prop:linear-solution}
Let \(m\ge 3\), and put $L=\lcm(1,2,\dots,m-1)$.
Let $v_1,\dots,v_{m-2}$ be integers satisfying
\[
 v_r+2r+3\in L\ZZ
 \qquad
 (1\le r\le m-2).
\]
Define
\begin{align*}
\lambda_2=&
\left\lceil
\frac{2m(m-1)}{L}
\right\rceil \quad \text{and}\\
\lambda_j=&
\left\lceil
\frac{
(m-j+2)(v_{m-j+1}+m^2+(1-j)m+3)
+
\sum_{r=m-j+2}^{m-2}(v_r+2r+3)
}
{L}
\right\rceil
\end{align*}
for \(3\le j\le m\), where $\lceil * \rceil$ denotes the smallest integer greater than or equal to the number $*$.

Choose an integer \(\lambda\) such that
\[
 \lambda\ge \max_{2\le j\le m}\lambda_j.
\]
Define
\begin{align*}
 a_2=&
 \frac{\lambda L}{m-1}-m-1\quad \text{and}\\
 a_j=&
 1+
 \frac{
 \lambda L
 -(m-j+2)(v_{m-j+1}+2m-2j+5)
 -\sum_{r=m-j+2}^{m-2}(v_r+2r+3)
 }
 {(m-j+1)(m-j+2)}
\end{align*}
for \(3\le j\le m\). 
Then the following hold: 
\begin{enumerate}[\rm(a)] 
\item $a_2,a_3,\dots,a_m\in\ZZ$.
\item $a_i\ge m-1$ for all $2\le i\le m$.
\item $A\mathbf a=\mathbf v$. 
\end{enumerate}

\end{Proposition}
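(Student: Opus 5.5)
The plan is to treat the system $A\mathbf a=\mathbf v$ as triangular and check (a), (b), (c) directly from the explicit formulas.

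\textbf{Reformulating the system.} Write $S_i=a_2+\cdots+a_i$ and reindex row $k$ of $A$ by $j=m+1-k$, so that $3\le j\le m$. Row $k$ then reads
\[
S_{j-1}-(m+1-j)\,a_j=v_{m+1-j}.
\]
Hence (c) is equivalent to the recursion $a_j=(S_{j-1}-v_{m+1-j})/(m+1-j)$ for $3\le j\le m$, with $a_2$ free.

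\textbf{Step 1: closed form for $S_j$ and proof of (c).} I will prove by induction on $j\ge 2$ that
\[
S_j=\frac{\lambda L-\sum_{r=m-j+1}^{m-2}(v_r+2r+3)}{m-j+1}-c_j
\]
for a suitable explicit correction term $c_j$, polynomial in $m,j$. For $j=2$ this is just the definition of $a_2$, with $c_2=m+1$. The inductive step is to add the given formula for $a_j$ to $S_{j-1}$ and simplify over the common denominator $(m-j+1)(m-j+2)$. Given the closed form for $S_{j-1}$, the identity $S_{j-1}-(m+1-j)a_j=v_{m+1-j}$ is then a direct algebraic check.

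As a sanity check, take $j=3$. Then $(S_2-v_{m-2})/(m-2)$ has numerator $\lambda L-(m-1)(m+1)-(m-1)v_{m-2}$ over $(m-1)(m-2)$. The stated $a_3$ has numerator $\lambda L-(m-1)(v_{m-2}+2m-1)+(m-2)(m-1)$ over the same denominator. These agree, so the shape of the formula is right. The main obstacle of the proof is pinning down $c_j$ and doing this telescoping bookkeeping cleanly.

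\textbf{Step 2: integrality, proving (a).} For $a_2$, note that $m-1\mid L$. For $3\le j\le m$, the numbers $m-j+1$ and $m-j+2$ are coprime and both are at most $m-1$, so their product divides $L$. It then suffices to show that the numerator of $a_j$ lies in $L\ZZ$. This holds for three reasons:
\begin{itemize}
\item $\lambda L\in L\ZZ$;
\item each $v_r+2r+3\in L\ZZ$ by hypothesis;
\item with $r=m-j+1$ we have $v_{m-j+1}+2m-2j+5=v_r+2r+3\in L\ZZ$.
\end{itemize}

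\textbf{Step 3: the bounds, proving (b).} For $a_2$, the choice $\lambda\ge\lambda_2$ gives $\lambda L\ge 2m(m-1)$, hence $a_2\ge 2m-m-1=m-1$.

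For $j\ge 3$, I use the identity
\[
\bigl(m^2+(1-j)m+3\bigr)-\bigl(2m-2j+5\bigr)=(m-2)(m-j+1).
\]
Since $\lambda\ge\lambda_j$, we have
\[
\lambda L\ \ge\ (m-j+2)\bigl(v_{m-j+1}+m^2+(1-j)m+3\bigr)+\sum_{r=m-j+2}^{m-2}(v_r+2r+3).
\]
By the identity, this makes the numerator of $a_j$ at least $(m-2)(m-j+1)(m-j+2)$. Dividing by $(m-j+1)(m-j+2)$ gives $a_j\ge 1+(m-2)=m-1$.
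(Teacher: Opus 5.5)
\textbf{Assessment.} Your parts (a) and (b) are the paper's arguments essentially verbatim. For (c) you take a different route from the paper, and it does work. As written, though, it is a plan rather than a proof: the unspecified $c_j$ is the entire content of (c) in your approach, and you leave it open.

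\textbf{Completing Step 1.} The correct value is $c_j=m-j+3$, which gives $c_2=m+1$, and the induction then closes in two lines. Put $d=m-j+1$, $N=\lambda L-\sum_{r=m-j+2}^{m-2}(v_r+2r+3)$ and $w=v_{m-j+1}+2d+3$. The inductive hypothesis is $S_{j-1}=N/(d+1)-(d+3)$, and the definition of $a_j$ reads $a_j=1+\bigl(N-(d+1)w\bigr)/\bigl(d(d+1)\bigr)$. Hence
\[
S_{j-1}-d\,a_j=w-2d-3=v_{m+1-j},\qquad S_j=S_{j-1}+a_j=\frac{N-w}{d}-(d+2).
\]
The first identity is row $k=m+1-j$ of $A\mathbf a=\mathbf v$, and the second is your closed form at $j$.

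There is also a cleaner way to see why the bookkeeping must close. Row $k=m+1-j$ says exactly $T_{j-1}-T_j=v_{m+1-j}$, where $T_j=(m-j+1)S_j$. With $c_j=m-j+3$, your closed form is just
\[
T_j=\lambda L-(m^2-1)-\sum_{r=m-j+1}^{m-2}v_r,
\]
because $\sum_{r=m-j+1}^{m-2}(2r+3)+(m-j+1)(m-j+3)=m^2-1$.

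\textbf{Comparison with the paper.} The paper avoids partial sums altogether. It checks only the last row $k=m-2$ directly, which is your $j=3$ sanity check. It then subtracts the cleared-denominator defining equations of $a_j$ and $a_{j+1}$ to obtain
\[
v_{m-j}=v_{m-j+1}+(m-j+2)a_j-(m-j)a_{j+1},
\]
which is precisely the difference of two consecutive rows of $A$. It finishes by descending induction on $k$.

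The paper's route needs no closed form to be found. Yours needs one, but it verifies every row directly by a forward induction on $j$. It also makes transparent that the given $a_j$ are simply the solution of a telescoping triangular system in the quantities $T_j$.
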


\begin{proof}
(a): Since $m-1$ divides $L$,
we have $a_2\in\ZZ$. 
Let \(3\le j\le m\).  Since the two integers $m-j+1$ and $m-j+2$ are relatively prime and both divide \(L\),
$(m-j+1)(m-j+2)$ divides $L$.
On the other hand, by the definition of $v_j$, 
\begin{align*}
v_r+2r+3  \quad (m-j+2 \le r \le m-2)
\end{align*}
are divisible by $(m-j+1)(m-j+2)$. Hence, noting that 
\[
v_{m-j+1}+2m-2j+5 = v_{m-j+1}+2(m-j+1)+3, 
\]
the numerator of $a_j-1$ 
is divisible by $(m-j+1)(m-j+2)$.
Thus $a_j\in\ZZ$ for all $3\le j\le m$.

(b): Since $\lambda\ge \lambda_2$,
we have $\lambda L\ge 2m(m-1)$.
Therefore
\[
 a_2
 =
 \frac{\lambda L}{m-1}-m-1
 \ge
 2m-m-1
 =
 m-1.
\]

Now let \(3\le j\le m\).  We first note that
\[
\begin{aligned}
m^2+(1-j)m+3
&=(m-2)(m-j+1)+2m-2j+5.
\end{aligned}
\]
Therefore, 
\[
\begin{aligned}
&(m-j+2)(v_{m-j+1}+m^2+(1-j)m+3)\\
=&
(m-j+2)(v_{m-j+1}+2m-2j+5)
+
(m-2)(m-j+1)(m-j+2).
\end{aligned}
\]
Since $ \lambda\ge \lambda_j$, the definition of \(\lambda_j\) gives
\[
\begin{aligned}
\lambda L
&\ge
(m-j+2)(v_{m-j+1}+m^2+(1-j)m+3)
+
\sum_{r=m-j+2}^{m-2}(v_r+2r+3)\\
&=
(m-j+2)(v_{m-j+1}+2m-2j+5)
+
(m-2)(m-j+1)(m-j+2) +\sum_{r=m-j+2}^{m-2}(v_r+2r+3).
\end{aligned}
\]
It follows that 
\begin{align*}
 a_j=&1+
 \frac{
 \lambda L
 -(m-j+2)(v_{m-j+1}+2m-2j+5)
 -\sum_{r=m-j+2}^{m-2}(v_r+2r+3)}{(m-j+1)(m-j+2)}\\
 \ge&
1+ \dfrac{(m-2)(m-j+1)(m-j+2)}{(m-j+1)(m-j+2)}\\
 =&m-1
\end{align*}
Thus, $ a_i\ge m-1$ for $i \in \{2, \ldots ,  m\}$.

(c): For \(3\le j\le m\), the definition of \(a_j\) gives
\begin{equation}\label{eq:linear-solution-aj}
\begin{aligned}
&(m-j+1)(m-j+2)(a_j-1)\\
=&
 \lambda L
 -(m-j+2)(v_{m-j+1}+2m-2j+5)
 -\sum_{r=m-j+2}^{m-2}(v_r+2r+3).
\end{aligned}
\end{equation}

Taking \(j=3\) in \eqref{eq:linear-solution-aj}, we obtain
\[
 (m-2)(m-1)(a_3-1)
 =
 \lambda L
 -(m-1)(v_{m-2}+2m-1).
\]
Since $ \lambda L=(m-1)(a_2+m+1)$, we get
\begin{align*}
 (m-2)(a_3-1)
 =&
 a_2+m+1-v_{m-2}-2m+1\\
 =&a_2-m+2-v_{m-2}.
\end{align*}
It follows that $a_2-(m-2)a_3=v_{m-2}$.
This is Equation \eqref{eq:vk-linear-expression} for \(k=m-2\).

Next let \(3\le j\le m-1\).  Applying \eqref{eq:linear-solution-aj} to
\(j+1\), we get
\[
\begin{aligned}
&(m-j)(m-j+1)(a_{j+1}-1)\\
=&
 \lambda L
 -(m-j+1)(v_{m-j}+2m-2j+3)
 -\sum_{r=m-j+1}^{m-2}(v_r+2r+3).
\end{aligned}
\]
Subtracting \eqref{eq:linear-solution-aj} from this equality gives
\[
\begin{aligned}
&(m-j)(m-j+1)(a_{j+1}-1)
-
(m-j+1)(m-j+2)(a_j-1)\\
=&
-(m-j+1)(v_{m-j}+2m-2j+3)
-\sum_{r=m-j+1}^{m-2}(v_r+2r+3)\\
&
+(m-j+2)(v_{m-j+1}+2m-2j+5)
+\sum_{r=m-j+2}^{m-2}(v_r+2r+3)\\
=&
-(m-j+1)(v_{m-j}+2m-2j+3)
+(m-j+2)(v_{m-j+1}+2m-2j+5)\\
&  -(v_{m-j+1}+2m-2j+5)\\
=&
-(m-j+1)(v_{m-j}+2m-2j+3)
+(m-j+1)(v_{m-j+1}+2m-2j+5)\\
=&
(m-j+1)
\bigl(
v_{m-j+1}-v_{m-j}+2
\bigr).
\end{aligned}
\]
Dividing by \(m-j+1\), we obtain
\[
 (m-j)(a_{j+1}-1)-(m-j+2)(a_j-1)
 =
 v_{m-j+1}-v_{m-j}+2.
\]
Equivalently,
\begin{align}\label{eqsugoi}
v_{m-j}=v_{m-j+1}+ (m-j+2)a_j-(m-j)a_{j+1}.
\end{align}

Now, we prove that Equation \eqref{eq:vk-linear-expression} holds for all \(1\le k\le m-2\). We have already proved this equation for \(k=m-2\). 
Assume that $k<m-2$ and Equation \eqref{eq:vk-linear-expression} holds for $k+1$. Applying \eqref{eqsugoi} to $j=m-k$, we obtain that
\begin{align*}
v_{k}=&v_{k+1} + (k+2)a_{m-k}-ka_{m-k+1}\\
=& \big( a_2+\cdots+a_{m-k-1}-(k+1) a_{m-k}\big)+ (k+2)a_{m-k}-ka_{m-k+1}\\
=&a_2+\cdots+a_{m-k}-k a_{m+1-k}.
\end{align*}
Hence, Equation \eqref{eq:vk-linear-expression} holds for $k$. By the descending induction, Equation \eqref{eq:vk-linear-expression} holds for all \(1\le k\le m-2\).
Thus $ A\mathbf a=\mathbf v$. This completes the proof.
\end{proof}

\begin{proof}[Proof of Theorem~\ref{thm:prescribed-signs}]
Put $ L=\lcm(1,2,\dots,m-1)$. For \(1\le r\le m-2\), define
\[
 v_r=
 \begin{cases}
 L-(2r+3),&\text{if }\varepsilon_r=+,\\
 -L-(2r+3),&\text{if }\varepsilon_r=-,\\
 -(2r+3),&\text{if }\varepsilon_r=0.
 \end{cases}
\]
Then
\[
 v_r+2r+3\in L\ZZ
 \qquad
 (1\le r\le m-2).
\]
By Proposition~\ref{prop:linear-solution}, there exist integers $ a_2,a_3,\dots,a_m$ such that $ a_i\ge m-1$ for $i \in \{2, \ldots ,  m\}$, and
$  A\mathbf a=\mathbf v$.

Now set $ p_1=\prod_{i=2}^{m}(a_i+1)$ and
\[
 p_i=\frac{p_1}{a_i+1}
 \qquad
 (2\le i\le m).
\]
Then
\[
 p_1=(a_i+1)p_i
 \qquad
 (2\le i\le m).
\]
Since $ a_i\ge m-1$, we have $a_i+1\ge m$ for $i \in \{2, \ldots ,  m\}$. Thus,
\[
 \frac{p_2+\cdots+p_{m-1}}{p_1}
 =
 \frac{1}{a_2+1}+\cdots+\frac{1}{a_{m-1}+1}
 \le
 \frac{m-2}{m}
 <1,
\]
and hence, $ p_2+\cdots+p_{m-1}<p_1$. Therefore, the hypotheses of Fact~\ref{thm:ABK-construction} are satisfied.

Let \(I\) be the monomial ideal obtained from Fact~\ref{thm:ABK-construction}.
By \eqref{eq:difference-formula}, for \(1\le r\le m-2\), we have
\[
 \mu(I^{r+1})-\mu(I^r)
 =
 2r+3+a_2+\cdots+a_{m-r}-r a_{m-r+1}.
\]
Since $ a_2+\cdots+a_{m-r}-r a_{m-r+1}=v_r$, we obtain $ \mu(I^{r+1})-\mu(I^r)=2r+3+v_r$.
By the definition of \(v_r\),
\[
 2r+3+v_r=
 \begin{cases}
 L,&\text{if }\varepsilon_r=+,\\
 -L,&\text{if }\varepsilon_r=-,\\
 0,&\text{if }\varepsilon_r=0.
 \end{cases}
\]
Therefore
\[
 \sign(\mu(I^{r+1})-\mu(I^r))=\varepsilon_r
 \qquad
 (1\le r\le m-2).
\]
Finally, Fact~\ref{thm:ABK-construction} gives
\[
 \mu(I^k)=(m+2)k+1
 \qquad
 (k\ge m).
\]
This completes the proof.
\end{proof}

\begin{Corollary}\label{cor:terminal-negative}
Let \(m\ge 3\), and let $\varepsilon_1,\dots,\varepsilon_{m-1}\in\{+,-,0\}$ with $\varepsilon_{m-1}=-$. Then there exists a monomial ideal \(I\subset K[x,y]\) such that
\[
\sign(\mu(I^{r+1})-\mu(I^r))=\varepsilon_r
\qquad
(1\le r\le m-1)
\]
and
\[
\mu(I^k)=(m+2)k+1
\qquad
(k\ge m).
\]
\end{Corollary}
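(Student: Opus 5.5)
The plan is to reuse the construction from the proof of Theorem~\ref{thm:prescribed-signs} without changing it, and to use the remaining freedom in the parameter $\lambda$ of Proposition~\ref{prop:linear-solution} to force the extra difference $\mu(I^m)-\mu(I^{m-1})$ to be negative. First, given $\varepsilon_1,\dots,\varepsilon_{m-2}$, I define $v_1,\dots,v_{m-2}$ exactly as in that proof, so that $v_r+2r+3\in L\ZZ$ with $L=\lcm(1,\dots,m-1)$. I then apply Proposition~\ref{prop:linear-solution} with an integer $\lambda\ge\max_{2\le j\le m}\lambda_j$, to be fixed later.

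For every such $\lambda$, the resulting integers $a_2,\dots,a_m$ satisfy $a_i\ge m-1$ and $A\mathbf a=\mathbf v$. Setting $p_1=\prod_{i=2}^m(a_i+1)$ and $p_i=p_1/(a_i+1)$, the hypotheses of Fact~\ref{thm:ABK-construction} hold, as in the proof of Theorem~\ref{thm:prescribed-signs}. The same computation as there gives
\[
\mu(I^{r+1})-\mu(I^r)=2r+3+v_r\in\{L,-L,0\}.
\]
Hence the signs $\varepsilon_1,\dots,\varepsilon_{m-2}$ are realized regardless of which admissible $\lambda$ is chosen. Fact~\ref{thm:ABK-construction} also gives $\mu(I^k)=(m+2)k+1$ for all $k\ge m$.

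It remains to handle $r=m-1$. By \eqref{number} with $k=m-1$, the sum $a_2+\cdots+a_{m+1-k}$ reduces to the single term $a_2$, so
\[
\mu(I^{m-1})=m^2+(m-1)a_2 \quad\text{and}\quad \mu(I^m)=m^2+2m+1.
\]
Thus
\[
\mu(I^m)-\mu(I^{m-1})=2m+1-(m-1)a_2,
\]
and this is negative if and only if $(m-1)a_2>2m+1$. Since $a_2=\lambda L/(m-1)-m-1$ grows without bound in $\lambda$, and enlarging $\lambda$ preserves $\lambda\ge\max_j\lambda_j$, I choose $\lambda$ so large that $a_2>(2m+1)/(m-1)$. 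For instance, $\lambda L\ge (m-1)(m+5)$ suffices, since then $a_2\ge 4>(2m+1)/(m-1)$ for $m\ge 3$.

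The only point needing care is checking that enlarging $\lambda$ does not disturb the earlier conclusions. It does not: the values $2r+3+v_r$ are independent of $\lambda$, and the lower bounds $a_i\ge m-1$ only improve as $\lambda$ grows. So the main step is simply the bound on $a_2$. Without enlarging $\lambda$, the bound $a_2\ge m-1$ alone gives $(m-1)^2>2m+1$ only for $m\ge5$, which is why the extra choice of $\lambda$ is needed to cover $m=3,4$.
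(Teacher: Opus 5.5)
Your proposal is correct and follows the paper's proof: you reuse the construction for $\varepsilon_1,\dots,\varepsilon_{m-2}$, note that enlarging $\lambda$ preserves all conclusions of Proposition~\ref{prop:linear-solution}, and force $\mu(I^m)-\mu(I^{m-1})=2m+1-(m-1)a_2<0$ by taking $\lambda$ large. The only difference is the threshold. The paper rewrites the difference as $m(m+2)-\lambda L$ and imposes the sharp condition $\lambda L>m(m+2)$, whereas your sufficient condition $\lambda L\ge (m-1)(m+5)$, which gives $a_2\ge 4$, works equally well for $m\ge 3$.
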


\begin{proof}
Put $L=\lcm(1,2,\dots,m-1)$. Apply the construction in the proof of Theorem~\ref{thm:prescribed-signs} to $\varepsilon_1,\dots,\varepsilon_{m-2}$. 
In the choice of \(\lambda\) in Proposition~\ref{prop:linear-solution}, we
may impose the additional condition
\[
\lambda L>m(m+2),
\]
because increasing \(\lambda\) preserves all inequalities in
Proposition~\ref{prop:linear-solution}.

The signs of $\mu(I^{r+1})-\mu(I^r)$ for $r \in \{1, \ldots , m-2 \}$ are already prescribed by Theorem~\ref{thm:prescribed-signs}.  It remains
to compute $\mu(I^{m}) - \mu(I^{m-1})$.  By Fact~\ref{thm:ABK-construction},
\[
\mu(I^{m-1})=m^2+(m-1)a_2
\quad \text{and} \quad
\mu(I^m)=(m+2)m+1.
\]
Therefore, $ \mu(I^m)-\mu(I^{m-1})=(m+2)m+1-m^2-(m-1)a_2=2m+1-(m-1)a_2$.

Since $a_2=\frac{\lambda L}{m-1}-m-1$, we get
\[
\begin{aligned}
\mu(I^m)-\mu(I^{m-1})
=2m+1-
\left(\lambda L-(m-1)(m+1)\right)=m(m+2)-\lambda L.
\end{aligned}
\]
By the additional choice $\lambda L>m(m+2)$, this number is negative.  Hence
\[
\sign(\mu(I^m)-\mu(I^{m-1}))=
\varepsilon_{m-1}.
\]
This completes the proof.
\end{proof}


Using the construction developed above, we can produce examples exhibiting any prescribed initial behavior of the number of minimal generators of powers of monomial ideals. In particular, one can obtain an arbitrary number of consecutive constant values, an arbitrary number of decreases, and arbitrary numbers of local minima and local maxima. The following example presents three such behaviors.

\begin{Example}\label{examp}
\begin{enumerate}[{\rm (a)}]

\item We provide an ideal such that $\mu(I^k)$ is constant for $k=1,\ldots,4$ and increasing for $k\ge 5$. With the notation of \cref{prop:linear-solution,thm:ABK-construction}, let $m=4$ and $\lambda=4$. Then
\[
a_2=3,\qquad a_3=5,\qquad a_4=13.
\]
Hence,
\[
p_1=336,\qquad p_2=84,\qquad p_3=56,\qquad p_4=24.
\]
This yields
\[
\begin{aligned}
I_1 &= (x^{336},y^{336})(x^{1680},y^{1680}),\\
I_2 &= x^{756}y^{1428}(x^{84},y^{84})^2,\\
I_3 &= x^{1064}y^{1148}(x^{56},y^{56})^4,\\
I_4 &= x^{1368}y^{836}(x^{24},y^{24})^{12}.
\end{aligned}
\]
Let $I = I_1 + I_2 + I_3 + I_4$. Then, by \eqref{number}, we obtain
\[
\mu(I)=\mu(I^2)=\mu(I^3)=\mu(I^4)=25,
\]
and
\[
\mu(I^5)=31.
\]
These values can be verified by a computation in CoCoA \cite{Cocoa}.

\item We provide an ideal such that $\mu(I^k)$ has two local maxima and three local minima. With the notation of \cref{prop:linear-solution,thm:ABK-construction}, let $m=5$ and $\lambda=5$. Then
\[
a_2=9,\qquad a_3=2,\qquad a_4=15,\qquad a_5=19.
\]
Hence,
\[
p_1=9600,\qquad p_2=960,\qquad p_3=3200,\qquad p_4=600,\qquad p_5=480.
\]
Thus,
\[
\begin{aligned}
I_1 &= (x^{9600},y^{9600})(x^{57600},y^{57600}),\\
I_2 &= x^{20160}y^{48960}(x^{960},y^{960})^8,\\
I_3 &= x^{32000}y^{42560}(x^{3200},y^{3200}),\\
I_4 &= x^{39000}y^{33560}(x^{600},y^{600})^{14},\\
I_5 &= x^{48480}y^{24440}(x^{480},y^{480})^{18}.
\end{aligned}
\]
Let $I=I_1+I_2+I_3+I_4+I_5$. Then, by \eqref{number}, we obtain
\[
\mu(I)=49,\quad
\mu(I^2)=61,\quad
\mu(I^3)=49,\quad
\mu(I^4)=61,\quad
\mu(I^5)=36,\quad
\mu(I^6)=43.
\]
These values can be verified by a computation in CoCoA \cite{Cocoa}. Since
\[
49<61>49<61>36<43,
\]
the sequence $\mu(I^k)$ has two local maxima and three local minima.

\item Let $m=7$ and $\lambda=60$. Then
\[
a_2=72,\qquad a_3=5,\qquad a_4=37,\qquad a_5=41,\qquad a_6=51, \qquad a_7=271.
\]
Then, by \eqref{number}, we obtain
\[
\mu(I)=481,\quad
\mu(I^2)=421,\quad
\mu(I^3)=481,\quad
\mu(I^4)=481,\quad
\mu(I^5)=421,\quad
\mu(I^6)=481,\quad
\mu(I^7)=64.
\]
We see that the sign pattern of $\mu(I^{k+1})-\mu(I^{k})$ is $\{-,+, 0, -, +,-\}$. 
\end{enumerate}
\end{Example}

Recall that for an $\mathfrak m$-primary ideal $J$ in a Noetherian local ring $(R,\mathfrak m)$, the \emph{index of reducibility} of $J$ is
\[
r(R/J)=\ell_R\big((J:_R\mathfrak m)/J\big),
\]
which coincides with the Cohen--Macaulay type of $R/J$. By a theorem of Cuong, Quy, and Truong~\cite[Theorem~4.1]{CQT}, the function
\[
n\longmapsto r(R/J^n)
\]
is eventually a polynomial of degree $\dim R-1$. In the case of a two-dimensional regular local ring, \cite[Proposition~2.9]{AK} shows that
\[
\mu(J^n)=r(R/J^n)+1
\]
for all $n\ge 1$. Therefore, \cref{cor:terminal-negative} immediately yields the following consequence.

\begin{Corollary}\label{cor:Index-red}
Let $R=K[[x,y]]$ be the formal power series ring over a field $K$ with the unique maximal ideal $\fkm=(x,y)$. Let $m\ge 3$, and let
\[
\varepsilon_1,\ldots,\varepsilon_{m-1}\in\{+,-,0\}
\]
with $\varepsilon_{m-1}=-$. Then there exists an $\mathfrak m$-primary ideal $J\subset R$ such that
\[
\operatorname{sign}\bigl(r(R/J^{k+1})-r(R/J^k)\bigr)=\varepsilon_k
\qquad (1\le k\le m-1),
\]
and
\[
r(R/J^k)=(m+2)k
\qquad (k\ge m).
\]
\end{Corollary}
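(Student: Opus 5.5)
The plan is to transport the monomial ideal from \cref{cor:terminal-negative} into the power series ring $R=K[[x,y]]$ and then turn the statement about $\mu$ into a statement about $r$ using \cite[Proposition~2.9]{AK}.

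First, apply \cref{cor:terminal-negative} to $m$ and the signs $\varepsilon_1,\dots,\varepsilon_{m-1}$. This gives a monomial ideal $I\subset S=K[x,y]$ with
\[
\sign(\mu(I^{k+1})-\mu(I^k))=\varepsilon_k \quad (1\le k\le m-1), \qquad \mu(I^k)=(m+2)k+1 \quad (k\ge m).
\]
The construction of Fact~\ref{thm:ABK-construction} forces $I$ to be $(x,y)$-primary. Indeed, $I_1\subseteq I$ contains $x^{(m+2)p_1}$ and $y^{(m+2)p_1}$, so $I$ contains pure powers of both variables.

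Next, set $J=IR$. I will check two things.

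\begin{enumerate}[(i)]
\item $J$ is $\fkm$-primary. This holds because $J$ contains pure powers of $x$ and $y$.
\item $\mu_R(J^k)=\mu_S(I^k)$ for all $k$. The argument uses the grading. $I^k$ is a monomial, hence homogeneous, ideal, so its minimal number of generators equals $\dim_K I^k/(x,y)I^k$. Since $I^k$ is $(x,y)$-primary, this quotient has finite length and is supported only at the origin. Completion at $(x,y)$ is flat, and a finite length module is unchanged by it, so $J^k/\fkm J^k\cong I^k/(x,y)I^k$. Also $J^k=I^kR$. By Nakayama's lemma in the local ring $R$, we get $\mu_R(J^k)=\dim_K J^k/\fkm J^k=\mu_S(I^k)$.
\end{enumerate}

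Finally, $R$ is a two-dimensional regular local ring, and each $J^k$ is $\fkm$-primary. So \cite[Proposition~2.9]{AK} applies and gives $r(R/J^k)=\mu(J^k)-1$ for all $k\ge1$. Subtracting the $1$ does not change consecutive differences, so
\[
\sign\bigl(r(R/J^{k+1})-r(R/J^k)\bigr)=\sign\bigl(\mu(I^{k+1})-\mu(I^k)\bigr)=\varepsilon_k .
\]
For $k\ge m$ it gives $r(R/J^k)=(m+2)k+1-1=(m+2)k$.

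The only step needing any care is (ii), the identification of $\mu$ over $S$ with $\mu$ over $R$. It is standard, via the graded Nakayama lemma together with the fact that completion preserves finite length quotients. Everything else is direct substitution.
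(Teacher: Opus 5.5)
Your proof is correct and follows the same route as the paper, which simply combines Corollary~\ref{cor:terminal-negative} with the identity $r(R/J^n)=\mu(J^n)-1$ in a two-dimensional regular local ring. The only difference is that you explicitly verify the passage from $K[x,y]$ to $K[[x,y]]$, namely that $J=IR$ is $\fkm$-primary and that $\mu_R(J^k)=\mu_S(I^k)$; the paper leaves this step implicit, and your argument for it is sound.
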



\begin{Acknowledgments}
The first author acknowledges support from LUMS under grant number STG-0240.
The second author was supported by JSPS KAKENHI Grant Number JP24K16909. 
\end{Acknowledgments}


\end{document}